\newcommand{\sysn}{\left\{\begin{array}{rcl}}
\newcommand{\sysk}{\end{array}\right.}
\newtheorem{theorem}{Theorem}[section]
\newtheorem{lemma}[theorem]{Lemma}
\theoremstyle{example}
\newtheorem{proposition}[theorem]{Proposition}
\theoremstyle{definition}
\newtheorem{definition}[theorem]{Definition}
\newtheorem{corollary}[theorem]{Corollary}
\journal{...}
\begin{document}

\title{Dieudonn\'{e} completeness of function spaces}

\author[affil1]{Mikhail Al'perin}

\address[affil1]{Krasovskii Institute of Mathematics and Mechanics,}

\ead{alper@mail.ru}

\author[affil1,affil2]{Alexander V. Osipov}
\address[affil2]{Ural Federal
 University, Yekaterinburg, Russia}

\ead{OAB@list.ru}

\begin{abstract}

A space is called {\it Dieudonn\'{e} complete} if it is complete
relative to the maximal uniform structure compatible with its
topology.

In this paper, we investigated when the function space $C(X,Y)$ of all
continuous functions from a topological space $X$ into a uniform
space $Y$ with the topology of uniform convergence on a family of
subsets of $X$ is Dieudonn\'{e} complete. Also we proved a generalization of the Eberlein--\v{S}mulian theorem to the class of Banach spaces.
\end{abstract}

\tnotetext[label1]{The research of the second author was supported
by the Russian Science Foundation (RSF Grant No. 23-21-00195).}

\begin{keyword}
 function space  \sep Dieudonn\'{e} complete \sep realcomplete \sep
topology of uniform convergence \sep uniform space \sep
Eberlein--\v{S}mulian theorem \sep Banach space \sep $\mu$-space

\MSC[2010] 54C35 \sep 54C40 \sep 54C25 \sep 54H11

\end{keyword}

\maketitle 


\section{Introduction}

By a space $X$ in this article is understood a Tychonoff
topological space. Recall that a Tychonoff space ($T_1$-space +
completely regular) is a $T_1$-space such that if $F$ is closed
and $x\not\in F$ then there is a continuous function
$f:X\rightarrow [0,1]$ such that $f(F)=0$ and $f(x)=1$. Tychonoff
spaces are also exactly the uniformizable spaces, i.e., $X$ is
Tychonoff if and only if its topology is generated by a
uniformity.

A space is called {\it Dieudonn\'{e} complete} if it is complete relative to the maximal uniform structure compatible with its topology. The Dieudonn\'{e} complete spaces can be characterized as the homeomorphic images of closed subspaces of a product of metric spaces~\cite{Eng}.

A special and, as it turned out, important form of topological
completeness was introduced by E.~Hewitt and L.~Nachbin. Spaces
satisfying this condition of completeness are called realcompact
spaces. Recall that a space is called {\it realcompact} if it is
homeomorphic to a closed subspace of the space $\mathbb{R}^{\tau}$
for a certain $\tau$. Over the course of the years these spaces
have been studied under a variety of different names: {\it
$e$-complete, functionally closed, Hewitt-Nachbin, realcomplete,
replete} and {\it saturated}. Hewitt, who introduced these spaces
to the literature, originally called them $Q$-spaces
\cite{Hewitt}. The term {\it realcompact}, popularized by the
enormous success of Gillman and Jerison's work \cite{GJ}, is now
prevalent in the literature.

If there does not exist an Ulam measurable cardinal, then a space
is realcompact if and only if it is Dieudonn\'{e} complete
\cite{Eng}.

 A study of some convergence properties in function spaces
is an important task of general topology. The general question in
the theory of function spaces is to characterize topological
properties of a space of functions on a topological space $X$.

Note that a space $C_p(X)$ of all real-valued continuous functions
with the pointwise convergence topology is realcompact if and only
if it is Dieudonn\'{e} complete. This follows from the fact that
the Suslin number $c(C_p(X))$ is countable \cite{ArPo}.

In \cite{Os20}, we investigated the realcompactness of the space
$B(X)$ of real-valued Baire functions with the pointwise
convergence topology.

Recall that a subset $A$ of a space $X$ is called a {\it
(topologically) bounded} subset (in $X$) if every continuous
real-valued function on $X$ is bounded on $A$.

A topological space $X$ is called a {\it $\mu$-space} if every
bounded subset of $X$ is relatively compact \cite{Buch}. Every
Dieudonn\'{e} complete space is a $\mu$-space.

\medskip

In the functional analysis, the Eberlein--\v{S}mulian theorem is a
result that relates three different kinds of weak compactness in a
Banach space. In 1940, \v{S}mulian proved that weakly relatively
compact sets in a Banach space $E$ are weakly relatively
sequentially compact. Dieudonn\'{e} and Schwartz extended this
result to locally convex spaces admitting a weaker metrizable
topology. Eberlein showed that weakly relatively countably compact
sets are weakly relatively compact for a Banach space $E$.

\begin{theorem}(Eberlein--\v{S}mulian) If $E$ is a Banach space and $A$ is a subset of $E$, then the following statements are equivalent:

(1) the weak closure of $A$ is weakly compact;

(2) each sequence of elements of $A$ has a subsequence that is weakly convergent in $E$;

(3) each sequence of elements of $A$ has a weak cluster point in $E$.
\end{theorem}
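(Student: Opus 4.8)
The plan is to establish the cycle of implications $(1)\Rightarrow(2)\Rightarrow(3)\Rightarrow(1)$, which yields the full equivalence. The implication $(2)\Rightarrow(3)$ is immediate, since the weak limit of a weakly convergent subsequence is in particular a weak cluster point of the original sequence. The genuine analytic content is split between $(1)\Rightarrow(2)$ (the \v{S}mulian part) and $(3)\Rightarrow(1)$ (the Eberlein part), and I expect the latter to be the main obstacle.

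For $(1)\Rightarrow(2)$, fix a sequence $(x_n)$ in $A$ and let $M=\overline{\operatorname{span}}\{x_n:n\in\mathbb{N}\}$ be its closed linear span, a separable closed subspace of $E$. I would first show that the weak topology of $E$, restricted to the weakly compact set $K_0=\overline{A}^w\cap M$, is metrizable: using separability of $M$, choose a countable norming family $(f_k)\subseteq M^*$ (apply Hahn--Banach along a dense sequence of $M$), so that $d(u,v)=\sum_k 2^{-k}\min(1,|f_k(u-v)|)$ defines a metric whose topology is coarser than the weak one. On the weakly compact set $K_0$ the identity map onto $(K_0,d)$ is then a continuous bijection from a compact space onto a Hausdorff space, hence a homeomorphism, so $K_0$ is weakly compact and metrizable, therefore weakly sequentially compact. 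Thus $(x_n)$ has a subsequence converging weakly in $M$, and since every $f\in E^*$ restricts to an element of $M^*$, this subsequence converges weakly in $E$, giving $(2)$.

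For $(3)\Rightarrow(1)$ I would proceed in three steps. First, $A$ is norm bounded: if some $f\in E^*$ were unbounded on $A$, a sequence $(x_n)\subseteq A$ with $|f(x_n)|\to\infty$ would admit a weak cluster point $x$ by $(3)$, forcing $f(x)$ to be a finite cluster point of $(f(x_n))$, a contradiction; hence $A$ is weakly bounded and, by the Banach--Steinhaus theorem, norm bounded. Second, let $J:E\to E^{**}$ be the canonical embedding and let $K=\overline{J(A)}^{\,w^*}$ be the weak* closure of $J(A)$ in $E^{**}$, which is weak* compact by the Banach--Alaoglu theorem. The decisive and hardest step is to prove $K\subseteq J(E)$: given $\phi\in K$, I would build by simultaneous induction a sequence $(x_n)\subseteq A$ and norm-one functionals $(f_n)\subseteq E^*$, at stage $n$ choosing $f_n$ to nearly attain the norm of $\phi$ on the finite-dimensional span of $\phi-J(x_1),\dots,\phi-J(x_{n-1})$ (where the norm is almost attained), and then choosing $x_n\in A$ (possible since $\phi\in\overline{J(A)}^{\,w^*}$) so that $|f_i(x_n)-\phi(f_i)|<1/n$ for all $i\le n$. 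By $(3)$ the sequence $(x_n)$ has a weak cluster point $x\in E$; the interlacing is arranged so that $\phi(f_i)=f_i(x)=J(x)(f_i)$ for every $i$ while the family $(f_i)$ norms $\phi-J(x)$, which forces $\phi=J(x)\in J(E)$. Controlling this interlacing so that the limiting functionals genuinely dominate $\|\phi-J(x)\|$ is the delicate point of the whole argument.

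Once $K\subseteq J(E)$ is established, the weak* topology of $E^{**}$ restricted to $K$ coincides, via $J$, with the weak topology of $E$, so $J^{-1}(K)$ is a weakly compact subset of $E$ containing $A$; being weakly closed, it contains $\overline{A}^w$, and a weakly closed subset of a weakly compact set is weakly compact. This gives $(1)$ and closes the cycle, proving the theorem.
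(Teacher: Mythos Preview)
The paper does not supply its own proof of this statement: the Eberlein--\v{S}mulian theorem is quoted in the introduction as a classical result, with references to the original papers, and is used only as background and motivation. There is therefore no proof in the paper to compare your attempt against directly. Your outline is a correct sketch of the standard direct proof---the $(1)\Rightarrow(2)$ step via separability and metrizability of weakly compact subsets, and the $(3)\Rightarrow(1)$ step via boundedness, Banach--Alaoglu on the bidual, and Whitley's interlacing device to pull the weak$^*$ closure back into $J(E)$---and the delicate point you flag (arranging the $f_i$ so that they norm $\phi-J(x)$) is indeed the crux, though it is routine once a finite $\varepsilon$-net on the unit sphere of each finite-dimensional span is used at every stage.

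If you want a comparison with the paper's \emph{perspective}, note that Section~4 recovers the equivalence of (1)--(3) indirectly through the angelic-space framework: it cites that $(E,\sigma(E,E'))$ is angelic (Pryce) and then invokes Theorem~4.4 (Floret), which says that in any angelic space the three relative compactness notions coincide. That route is conceptually different from yours---it abstracts the phenomenon to a topological class and imports the nontrivial fact that Banach spaces are weakly angelic, rather than arguing via the bidual---but the underlying analytic work (typically a $C_p$-theoretic or interlacing argument) is merely relocated, not avoided.
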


Later, the Eberlein--\v{S}mulian theorem was generalized in several directions. One of them is related to the expansion of the class of topologies in which the Eberlein--\v{S}mulian theorem holds, and is based on the concept of {\it angelic space} introduced by Fremlin \cite{freml}.

\medskip

In this paper, we to study when the function space $C(X,Y)$ of all
continuous functions from a topological space $X$ into a uniform
space $Y$ with the topology of uniform convergence on a family of
subsets of $X$ is Dieudonn\'{e} complete. In the last section we proved a generalization of the Eberlein--\v{S}mulian theorem to the class of Banach spaces.

\section{Notation and terminology}
 The set of positive integers is denoted by $\mathbb{N}$ and
$\omega=\mathbb{N}\cup \{0\}$. Let $\mathbb{R}$ be the real line. We denote by $\overline{A}$ (or $Cl_X A$)
the closure of $A$ (in $X$).

\begin{definition}\label{t1_2_1}{\rm\cite{kell}}
Let $X$ be a topological space, $\lambda\subseteq 2^X$, $(Y,\mu)$
be a uniform space. A topology on $C(X,Y)$ generated by the
uniformity
$$\nu=\{\langle A,M\rangle \subseteq C(X,Y)\times
C(X,Y):A\in\lambda,M\in\mu\}$$ where
$$\langle A,M\rangle = \{\langle f,g\rangle \in C(X,Y)\times
C(X,Y):\forall x\in A~\langle f(x),g(x)\rangle\in M\}$$ is called
{\it topology of uniform convergence on elements of $\lambda$} and
denote by $C_{\lambda,\mu}(X,Y)$.
\end{definition}

The well-known fact that if $\lambda$ is a family of all compact
subsets of $X$ or all finite subsets of $X$ then the topology on
$C(X,Y)$ induced by the uniformity $\nu$ of uniform convergence on
elements of $\lambda$ depends only on the topology induced on $Y$
by the uniformity $\mu$ (see \cite{kell, Eng}). In these cases, we
will use the notation $C_c(X,Y)$ and $C_p(X,Y)$, respectively. If
$Y=\mathbb{R}$ then $C_c(X)$ and $C_p(X)$, respectively.

In case, if $(Y,\rho)$ is a metric space and the uniformity $\mu$
is induced by the metric $\rho$, then for $C_{\lambda,\mu}(X,Y)$,
we will use the notation $C_{\lambda,\rho}(X,Y)$ and
$C_{\lambda,\rho}(X)$ for the case $Y=\mathbb{R}$.

If $X\in\lambda$, we write $C_\mu(X,Y)$ in place of
$C_{\lambda,\mu}(X,Y)$ and $C_{\mu}(X)$ in place of
$C_{\mu}(X,\mathbb{R})$.

Recall that a {\it bornology} on a space $X$ is a family $\lambda$
of nonempty subsets of $X$ which is closed under finite unions,
hereditary (i.e. closed under taking nonempty subsets) and forms a
cover of $X$ \cite{HN}. Note that if $\lambda$ is a bornology then
$C_{\lambda,\mu}(X,Y)$ is Hausdorff \cite{Bur75b}.

A base for a bornology $\lambda$ on $X$ is a subfamily $\lambda'$
of $\lambda$ which is cofinal in $\lambda$ with respect to the
inclusion, i.e. for each $A\in \lambda$ there is $B\in \lambda'$
such that $A\subseteq B$. A base is called {\it closed} if all its
members are closed subsets of $X$. Note that if a bornology
$\lambda$ has a closed base, then $A\in\lambda$ implies
$\overline{A}\in \lambda$.

{\it Throughout the paper we suppose that a bornology $\lambda$ on
a space $X$ is a bornology with a closed base.}

\begin{definition}(\cite{Arh89b}) Let $\lambda$ be a family
of subsets of a topological space $X$ and $Y$ be a topological
space. The space $X$ is {\it functionally generated by the family
$\lambda$ with respect to $Y$} if the following condition holds:
for every discontinuous function $f:X\rightarrow Y$ there is an
$A\in \lambda$ such that the function $f|A$ cannot be extended to
a $Y$-valued continuous function on all of $X$.
\end{definition}

Recall that a subset $A$ of a topological space $X$ is called

\medskip

$\bullet$ {\it relatively compact} if $A$ has a compact closure in $X$;

$\bullet$ {\it relatively sequential compact} if every sequence from $A$ has a convergent subsequence whose limit is in $X$;

$\bullet$ {\it sequential compact} if every sequence from $A$ has a convergent subsequence whose limit is in $A$;

$\bullet$  {\it countably compact} if each sequence of $A$ has a
cluster point in $A$;

$\bullet$  {\it relatively countably compact} if each sequence of
$A$ has a cluster point in $X$.

\medskip

For other notation and terminology almost without exceptions we follow the Engelking's book \cite{Eng} and the papers \cite{Os,Os20}.

\section{Dieudonn\'{e} completeness of function spaces}

\begin{theorem}(\cite{Kats}) Let $X$ be a subspace of a Dieudonn\'{e} complete space $Y$ and for any point
$y\in Y\setminus X$ there is a $G_{\delta}$-set $V$ containing $y$
such that $V\subseteq Y\setminus X$. Then $X$ is Dieudonn\'{e}
complete.
\end{theorem}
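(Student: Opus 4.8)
The plan is to work directly with the maximal (fine) uniformity $\mathcal{U}_X$ of $X$ and to show that every $\mathcal{U}_X$-Cauchy filter converges in $X$, which is precisely Dieudonn\'{e} completeness in the sense used here. First I would note that $\mathcal{U}_X$, being the maximal uniformity compatible with the topology, is finer than the uniformity $\mathcal{U}_Y|_X$ that $X$ inherits from the fine uniformity $\mathcal{U}_Y$ of $Y$. Hence any $\mathcal{U}_X$-Cauchy filter $\mathcal{F}$ on $X$ is also $\mathcal{U}_Y|_X$-Cauchy, so the filter it generates on $Y$ is $\mathcal{U}_Y$-Cauchy. Since $Y$ is Dieudonn\'{e} complete, this filter converges to some point $y\in Y$, and $\mathcal{F}\to y$ in $Y$ (so $\mathcal{F}$ is eventually inside every $Y$-neighbourhood of $y$). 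Everything then reduces to proving that $y\in X$.

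Suppose, for contradiction, that $y\in Y\setminus X$. By hypothesis there is a $G_{\delta}$-set $V=\bigcap_n W_n$ with $y\in V\subseteq Y\setminus X$ and each $W_n$ open in $Y$. The decisive step is to refine the $W_n$ into cozero sets with nested closures: using that $Y$ is Tychonoff (so cozero sets form a base and every open neighbourhood of a point contains a cozero neighbourhood whose closure still lies inside), I would choose inductively continuous functions $f_n\colon Y\to[0,1]$ with $f_n(y)=1$, put $C_n:=\{y'\in Y: f_n(y')>0\}$, and arrange $y\in C_{n+1}\subseteq \overline{C_{n+1}}\subseteq C_n\cap W_{n+1}$. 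Then $\bigcap_n\overline{C_n}=\bigcap_n C_n\subseteq Y\setminus X$, so the closed sets $\overline{C_n}\cap X$ decrease to the empty set. Consequently the family $\{C_n\cap X\}_n$ is locally finite in $X$: given $x_0\in X$, some $N$ has $x_0\notin\overline{C_N}$, and $X\setminus\overline{C_N}$ is then a neighbourhood of $x_0$ meeting only $C_1,\dots,C_{N-1}$.

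With local finiteness in hand, I would define $\psi=\sum_n (f_n|_X)\colon X\to\mathbb{R}$. Because near each point of $X$ only finitely many summands are nonzero, $\psi$ is a well-defined continuous function on $X$. On the other hand, $\mathcal{F}$ is eventually inside every neighbourhood $C_n$ of $y$, and each $f_k$ is continuous at $y$ with $f_k(y)=1$, so $f_k\to 1$ along $\mathcal{F}$ for every $k$; as the summands are nonnegative, this forces $\psi\to+\infty$ along $\mathcal{F}$. But the pseudometric $(x,x')\mapsto|\psi(x)-\psi(x')|$ is continuous, hence its uniformity is contained in the maximal uniformity $\mathcal{U}_X$, so a $\mathcal{U}_X$-Cauchy filter must contain a member on which $\psi$ is bounded, contradicting $\psi\to+\infty$ along $\mathcal{F}$. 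Therefore $y\in X$, the filter $\mathcal{F}$ converges in $X$, and $X$ is Dieudonn\'{e} complete.

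The main obstacle is exactly the continuity of $\psi$. A blow-up function $\sum_n f_n$ built from an arbitrary $G_{\delta}$-representation of $V$ need not be continuous, since $\bigcap_n C_n\cap X=\emptyset$ does not by itself make $\{C_n\cap X\}$ locally finite in $X$. The remedy, and the one point genuinely requiring care, is the passage to cozero sets satisfying $\overline{C_{n+1}}\subseteq C_n$, which upgrades ``empty intersection of the sets'' to ``empty intersection of their closures in $X$'' and thereby yields local finiteness; it is the complete regularity of $Y$ that makes this shrinking available, and it is the resulting continuous-but-unbounded $\psi$ that separates the fine uniformity of $X$ from the trace of the fine uniformity of $Y$.
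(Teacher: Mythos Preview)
The paper does not prove this theorem at all: it is quoted from \cite{Kats} and used as a ready-made tool, so there is no ``paper's own proof'' to compare your attempt against.

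Your argument is correct and is in fact one of the standard ways to establish the result. The only points worth tightening are these. First, the passage from a $\mathcal{U}_X$-Cauchy filter on $X$ to a $\mathcal{U}_Y$-Cauchy filter on $Y$ is legitimate because the subspace uniformity $\mathcal{U}_Y|_X$ induces the subspace topology and is therefore coarser than the fine uniformity $\mathcal{U}_X$; this deserves a one-line justification rather than a parenthetical. Second, in the inductive construction of the cozero sets $C_n$ you should say explicitly how $f_{n+1}$ is produced so that simultaneously $f_{n+1}(y)=1$, $C_{n+1}=\{f_{n+1}>0\}$, and $\overline{C_{n+1}}\subseteq C_n\cap W_{n+1}$: take $g\colon Y\to[0,1]$ with $g(y)=1$ and $g\equiv 0$ off $C_n\cap W_{n+1}$, and set $f_{n+1}=\max(2g-1,0)$. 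Third, the step ``$\psi\to+\infty$ along $\mathcal{F}$'' should be unpacked: for a given $N$ choose a neighbourhood of $y$ on which $f_1,\dots,f_N$ are all at least $1/2$, so that $\psi\geq N/2$ on the corresponding member of $\mathcal{F}$. With these clarifications the contradiction with $\mathcal{U}_X$-Cauchyness via the continuous pseudometric $|\psi(x)-\psi(x')|$ goes through exactly as you describe.
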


 Let $X$ be a topological space, $\lambda\subseteq 2^X$. Recall that the space $X$ is called

$\bullet$ a {\it $\lambda_f$-space}, if whenever $f:\, X\to Y$,
where $Y$ is an arbitrary Tychonoff space, is continuous if and
only if  $f|_{\overline{A}}:\, \overline{A}\to Y$ is continuous
for all $A\in\lambda$ (Def. 3.8 in \cite{Os23});

$\bullet$ a {\it $\lambda$-space}, if for any $A\subseteq X$, $A$
is closed in $X$ if and only if $A\cap \overline{B}$ is closed in
$\overline{B}$ for all $B\in \lambda$ (Def. 3.1 in \cite{Os23}).

\medskip

Note that any $\lambda$-space is also $\lambda_f$-space and any
$\lambda_f$-space is functionally generated by $\lambda$ with
respect to any Tychonoff space $Y$.

\begin{lemma} Let $X$ be a topological space, $\lambda$  be a bornology  and $(Y,\rho)$ be a metric space.
If $X$ is a $\lambda_f$-space then $C_{\lambda,\rho}(X,Y)$ is
Dieudonn\'{e} complete.

\end{lemma}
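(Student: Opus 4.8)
The plan is to exhibit $C_{\lambda,\rho}(X,Y)$ as a subspace of a concrete Dieudonné complete space whose complement is accessible by $G_\delta$-sets, and then to invoke the theorem of Kats stated above. Fix a closed base $\lambda'$ for the bornology $\lambda$. For each $A\in\lambda'$ let $Y^A$ denote the set of \emph{all} maps $A\to Y$ with the topology of uniform convergence; since $\min\{1,\rho\}$ is a bounded metric uniformly equivalent to $\rho$, the formula $d_A(\varphi,\psi)=\sup_{x\in A}\min\{1,\rho(\varphi(x),\psi(x))\}$ defines a metric inducing that topology, so each $Y^A$ is metrizable. I would then form $P=\prod_{A\in\lambda'}Y^A$ and the restriction map $\Phi\colon Y^X\to P$, $\Phi(g)=(g|_A)_{A\in\lambda'}$, where $Y^X$ carries the topology of uniform convergence on $\lambda$. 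Because $\lambda'$ is a cofinal cover of $X$, $\Phi$ is injective, and because that topology is exactly the initial topology generated by the restrictions $r_A\colon Y^X\to (Y^A,d_A)$, the map $\Phi$ is an embedding.

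Next I would verify that $\Phi(Y^X)$ is closed in $P$: its image is precisely the set of coherent families $(g_A)_A$ satisfying $g_B|_A=g_A$ whenever $A\subseteq B$ in $\lambda'$. For each such pair and each $x\in A$ the two maps $(g_C)_C\mapsto g_A(x)$ and $(g_C)_C\mapsto g_B(x)$ are continuous into the Hausdorff space $Y$ (evaluation is $1$-Lipschitz for $d_A$, and the projections are continuous), so their equalizer is closed, and $\Phi(Y^X)$ is an intersection of such closed sets. Since $P$ is a product of metric spaces it is Dieudonné complete, and a closed subspace of a Dieudonné complete space is again Dieudonné complete \cite{Eng}; hence $Y^X$, with the topology of uniform convergence on $\lambda$, is Dieudonné complete, and $C_{\lambda,\rho}(X,Y)$ sits inside it as a subspace.

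It then remains to check the $G_\delta$ hypothesis of Kats' theorem for $C(X,Y)\subseteq Y^X$, and this is exactly where the $\lambda_f$-property enters. Let $g\in Y^X$ be discontinuous. Because $\lambda$ has a closed base and $X$ is a $\lambda_f$-space, continuity can be tested on the members of $\lambda'$: if $g|_B$ were continuous for every $B\in\lambda'$, then $g|_{\overline A}$ would be continuous for every $A\in\lambda$, forcing $g$ to be continuous. Hence there is $A_0\in\lambda'$ with $g|_{A_0}$ discontinuous. Since a uniform limit of continuous maps is continuous, $C(A_0,Y)$ is closed in $Y^{A_0}$, so the set $\mathcal D=Y^{A_0}\setminus C(A_0,Y)$ of discontinuous maps on $A_0$ is open in the metric space $Y^{A_0}$. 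The restriction $r_{A_0}\colon Y^X\to Y^{A_0}$ is continuous (it is the $A_0$-th coordinate of $\Phi$), so $V=r_{A_0}^{-1}(\mathcal D)$ is open, hence a $G_\delta$-set in $Y^X$; it contains $g$, and every $h\in V$ has discontinuous restriction to $A_0$ and is therefore discontinuous on $X$, so $V\subseteq Y^X\setminus C(X,Y)$. Applying the theorem of Kats with ambient space $Y^X$ and subspace $C(X,Y)$ would give that $C_{\lambda,\rho}(X,Y)$ is Dieudonné complete.

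The hard part will be the second paragraph: identifying the uniform-convergence space as a \emph{closed} subspace of a product of metric spaces, i.e. checking that coherence is a closed condition and that $\Phi$ is genuinely a homeomorphism onto its image (so that the metrizable factors $Y^A$ carry exactly the restriction topology). Once this structural step is secured, the $\lambda_f$-property does the rest almost formally, turning every discontinuous function into a witness lying in an open, hence $G_\delta$, slice of the complement, so that Kats' criterion applies.
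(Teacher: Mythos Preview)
Your argument is correct, but it differs from the paper's proof and in fact proves more than you realise. The paper proceeds in one stroke: it quotes Theorem~4.5 of \cite{Os23} to identify $C_{\lambda,\rho}(X,Y)$ with the inverse limit of the system $\{C_\rho(\overline{A},Y),\pi^{A_1}_{A_2},\lambda\}$, observes that each factor $C_\rho(\overline{A},Y)$ is metrizable, and concludes because an inverse limit is closed in the product. Your route instead first embeds the larger space $Y^X$ (all maps, uniform convergence on $\lambda$) as a closed subspace of $\prod_{A\in\lambda'}Y^A$, and then cuts down to $C(X,Y)$ via Kats' theorem. The advantage of your version is self-containment: you do not need to import the inverse-limit representation from \cite{Os23}, and the role of the $\lambda_f$-hypothesis becomes completely transparent. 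One remark, however: the detour through Kats' theorem is unnecessary. In Step~5 you produce, for each discontinuous $g$, an \emph{open} neighbourhood $V=r_{A_0}^{-1}(\mathcal D)$ of $g$ disjoint from $C(X,Y)$; this shows outright that $C(X,Y)$ is \emph{closed} in the Dieudonn\'e complete space $Y^X$, and you are done without ever invoking the $G_\delta$ criterion. With that simplification your proof and the paper's become essentially the same statement---$C_{\lambda,\rho}(X,Y)$ is closed in a product of metric spaces---reached by slightly different bookkeeping: the paper uses continuous-function factors $C_\rho(\overline{A},Y)$ and packs the $\lambda_f$-property into the cited isomorphism, while you use the bigger factors $Y^A$ and make the $\lambda_f$-step explicit at the end.
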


\begin{proof} By Theorem 4.5 in \cite{Os23}, $C_{\lambda, \rho}(X,Y)$ is uniformly homeomorphic to the inverse
limit of the system
$S(X,\lambda,Y)=\{C_\rho(\overline{A},Y),\pi^{A_1}_{A_2},\lambda\}$. Note that for any $A\in \lambda$ the space $C_\rho(\overline{A},Y)$ is metrizable by metric $\rho^*(f,g)=\max \{1, \sup\limits_{x\in \overline{A}}(\rho(f(x),g(x))\}$ where $\rho$ is a metric of $Y$.

Since the inverse
limit of the system
$S(X,\lambda,Y)$ is closed subset of $\prod \{ C_\rho(\overline{A},Y): A\in \lambda\}$, $C_{\lambda,\rho}(X,Y)$ is Dieudonn\'{e} complete.
\end{proof}

\begin{lemma} Let $X$ be a topological space, $\lambda$ be a bornology and $(Y,\rho)$ be a metric space. Then sets

$U_{*}(f,A,\epsilon)=\{g\in C(X,Y): \exists a_g<\epsilon: \forall x\in A$  $\rho(f(x),g(x))\leq a_g\}$,
 $A\in \lambda$ and $\epsilon>0$ are open sets in $C_{\lambda,\rho}(X,Y)$. Moreover, these sets form the base of the space $C_{\lambda,\rho}(X,Y)$.

\end{lemma}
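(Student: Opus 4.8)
The plan is to first rewrite the sets $U_*(f,A,\epsilon)$ in a more transparent form and then compare them with the canonical basic neighbourhoods coming from the uniformity $\nu$. I would observe that the condition defining $U_*(f,A,\epsilon)$ — the existence of a single bound $a_g<\epsilon$ with $\rho(f(x),g(x))\le a_g$ for all $x\in A$ — holds exactly when $\sup_{x\in A}\rho(f(x),g(x))<\epsilon$: any such $a_g$ is an upper bound of $\{\rho(f(x),g(x)):x\in A\}$ that is $<\epsilon$, and conversely, if the supremum $s$ satisfies $s<\epsilon$ one may choose $a_g$ with $s\le a_g<\epsilon$. Thus $U_*(f,A,\epsilon)=\{g\in C(X,Y):\sup_{x\in A}\rho(f(x),g(x))<\epsilon\}$. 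I would also record the canonical basic neighbourhoods: since the entourages $\langle A,M_\delta\rangle$ with $M_\delta=\{(y,z):\rho(y,z)<\delta\}$ form a base of $\nu$, the sets $W(g,A,\delta)=\{h\in C(X,Y):\rho(g(x),h(x))<\delta\text{ for all }x\in A\}$, with $A\in\lambda$ and $\delta>0$, constitute a neighbourhood base at $g$ in $C_{\lambda,\rho}(X,Y)$.

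For openness I would fix $g\in U_*(f,A,\epsilon)$ together with a witness $a_g<\epsilon$ as above, and set $\eta=(\epsilon-a_g)/2>0$. Then I would check that $W(g,A,\eta)\subseteq U_*(f,A,\epsilon)$: for $h\in W(g,A,\eta)$ the triangle inequality gives, for every $x\in A$, $\rho(f(x),h(x))\le\rho(f(x),g(x))+\rho(g(x),h(x))<a_g+\eta$, so that $a_h:=a_g+\eta<\epsilon$ is a uniform bound witnessing $h\in U_*(f,A,\epsilon)$. Since $g$ was arbitrary and $W(g,A,\eta)$ is a basic neighbourhood of $g$, this shows $U_*(f,A,\epsilon)$ is open.

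For the base property I would take an arbitrary open set $O$ in $C_{\lambda,\rho}(X,Y)$ and a point $g\in O$, and choose a canonical basic neighbourhood $W(g,A,\delta)\subseteq O$. Taking $a_g=0<\delta$ shows $g\in U_*(g,A,\delta)$, while the reformulation of the first paragraph yields $U_*(g,A,\delta)\subseteq W(g,A,\delta)$, because $\sup_{x\in A}\rho(g(x),h(x))<\delta$ forces each pointwise distance to be $<\delta$. Hence $g\in U_*(g,A,\delta)\subseteq O$, so every open set is a union of sets of the form $U_*(f,A,\epsilon)$, i.e.\ they form a base.

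The computations are elementary, and there is no serious obstacle; the only point that needs care is the bookkeeping of strict versus non-strict inequalities. The definition of $U_*$ uses $\rho(f(x),g(x))\le a_g$ with $a_g<\epsilon$, which is precisely what makes $U_*(f,A,\epsilon)$ the ``strict-supremum'' ball and guarantees the clean inclusion $U_*(g,A,\delta)\subseteq W(g,A,\delta)$ exploited in the base argument. I would also note that, because $\lambda$ has a closed base and $f,g$ are continuous, the supremum over $A$ and over $\overline{A}$ coincide, so working with $A$ or with its closure makes no difference.
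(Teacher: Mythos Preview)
Your proof is correct and follows essentially the same route as the paper: both establish openness by taking $g\in U_*(f,A,\epsilon)$ with witness $a_g$ and showing that the canonical neighbourhood $\langle g,A,V_\eta\rangle$ (your $W(g,A,\eta)$) with any $\eta<\epsilon-a_g$ is contained in $U_*(f,A,\epsilon)$ via the triangle inequality, and both obtain the base property from the inclusion $U_*(f,A,\epsilon)\subseteq\langle f,A,V_\epsilon\rangle$. Your additional reformulation of $U_*(f,A,\epsilon)$ as the strict-supremum ball and your remark about $A$ versus $\overline{A}$ are helpful clarifications but do not change the argument.
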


\begin{proof} Let $V_{\epsilon}=\{(x,y)\in Y\times Y: \rho(x,y)<\epsilon\}\subseteq Y\times Y$ be an element of the uniformity of
$Y$ and  $\langle g, A,V_{\epsilon}\rangle:=\{h: \forall x\in A$
$\rho(h(x),g(x))<\epsilon\}$ where $\epsilon>0$.

 Note that for any $g\in U_{*}(f,A,\epsilon)$ there is $\epsilon_1>0$ such that $\langle g, A,V_{\epsilon_1}\rangle \subseteq U_{*}(f,A,\epsilon).$

Indeed, choose $\epsilon_1$ such that   $0<\epsilon_1<\epsilon-a_g$. Then

$\rho(h(x),f(x))\leq
\rho(h(x),g(x))+\rho(g(x),f(x))<\epsilon_1+a_g<\epsilon$ for any
$h\in \langle g, A,V_{\epsilon_1}\rangle$ and $x\in A$, i.e.,
$h\in U_{*}(f,A,\epsilon)$. This shows, that $U_{*}(f,A,\epsilon)$
is open.

Note that $U_{*}(f,A,\epsilon)\subseteq \langle f,
A,V_{\epsilon}\rangle$ for any $f\in C(X,Y)$, $A\in \lambda$ and
$\epsilon>0$. Hence, the set $\{U_{*}(f,A,\epsilon): f\in C(X,Y),
A\in \lambda, \epsilon>0\}$ is a base of $C_{\lambda,\rho}(X,Y)$.

\end{proof}

Let $X$, $Y$ be sets, $Z$ be a topological space and
$f:X\rightarrow Y$ be a map (between {\it sets} $X$ and $Y$).
Define the map $f^\#:Z^Y\rightarrow Z^X$ (between {\it topological
spaces}) dual to $f$ as follows: if $\phi\in Z^Y$ then
$f^\#(\phi)(x)=\phi(f(x))$ for all $x\in X$, i.e.
$f^\#(\phi)=\phi\circ f$. Note that if $f$ is continuous then
$f^\#(C(Y,Z))\subseteq C(X,Z)$.

Let $X$ be a topological space, $\lambda\subseteq 2^X$. Let
$Q=\{B\subseteq X: \overline{A}\cap B$ is closed in $\overline{A}$
for all $A\in \lambda\}$. Let $X_{\lambda}$ be a set $X$ with the
topology $\tau=\{X\setminus B: B\in Q\}$. Note that if $X$ is a
$\lambda$-space, then $X_{\lambda}$ is identical to $X$.

Let $e: X_{\lambda}\rightarrow X$ be an identity map on $X$. Since
the topology on $X$ is weaker than the topology on $X_{\lambda}$,
the map $e$ is continuous. Thus $e$ is a condensation (= a
continuous bijection). Note that $e|_{\overline{A}}:
\overline{A}^{\tau}\rightarrow \overline{A}$ is a homeomorphism
for each $A\in \lambda$.

Put $e^{-1}(\lambda)=\{e^{-1}(A): A\in \lambda\}$. Note that as a
family of sets, it is identical to $\lambda$, also note that
$X_{\lambda}$ is a $e^{-1}(\lambda)$-space (resp.
$\lambda$-space). Further, we will call $X_{\lambda}$ as {\it
$\lambda$-leader} of $X$.

Note that $e^\#$ is an embedding of $C_{\lambda,\rho}(X,Y)$ into
$C_{e^{-1}(\lambda),\rho}(X_{\lambda},Y)$.
\medskip

If $\lambda$ is a family of subsets of a topological space $X$,
then the family of all countable unions of elements of $\lambda$
will be denoted by $\sigma\lambda$.

If $X$ is functionally generated by $\lambda$ it is also
functionally generated by $\sigma\lambda$ (with respect to the
same $Y$). Thus every $\lambda_f$-space is functionally generated
by $\sigma\lambda$ and hence the following theorem is a
generalization of Lemma 3.2.

\begin{theorem}\label{th14} Let $X$ be a topological space, $\lambda$ be a bornology, $(Y,\rho)$ be a metric space. If $X$ is functionally
generated with respect to $Y$ by $\sigma\lambda$, then
$C_{\lambda,\rho}(X,Y)$ is Dieudonn\'{e} complete.

\end{theorem}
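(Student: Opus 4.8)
The plan is to realize $C_{\lambda,\rho}(X,Y)$ as a suitably nice subspace of a function space over the $\lambda$-leader $X_{\lambda}$, which is already known to be Dieudonn\'e complete, and then to apply the Kat\v{e}tov-type criterion of Theorem~3.1. Since $X_{\lambda}$ is a $\lambda$-space it is a $\lambda_f$-space, so by Lemma~3.2 the space $Z:=C_{e^{-1}(\lambda),\rho}(X_{\lambda},Y)$ is Dieudonn\'e complete. As noted before the statement, $e^\#$ embeds $C_{\lambda,\rho}(X,Y)$ onto the subspace $Z_0:=e^\#(C_{\lambda,\rho}(X,Y))$ of $Z$. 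Because $e$ is the identity on the underlying set, $Z_0$ is exactly the set of those $\phi\in C(X_{\lambda},Y)$ that remain continuous as maps $X\to Y$; equivalently, $Z\setminus Z_0$ consists precisely of the $\phi\in C(X_{\lambda},Y)$ that are \emph{discontinuous} when regarded as functions on $X$. It therefore suffices, by Theorem~3.1, to produce for each such $\phi$ a $G_{\delta}$-subset of $Z$ containing $\phi$ and missing $Z_0$.

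Fix $\phi\in Z\setminus Z_0$. Viewing $\phi$ as a discontinuous function $X\to Y$ and using that $X$ is functionally generated with respect to $Y$ by $\sigma\lambda$, choose $A\in\sigma\lambda$ for which $\phi|_A$ admits no continuous extension to a map $X\to Y$. Write $A=\bigcup_{n}A_n$ with $A_n\in\lambda$; replacing $A_n$ by $A_1\cup\dots\cup A_n$ (permissible since $\lambda$ is closed under finite unions), we may assume $A_1\subseteq A_2\subseteq\cdots$.

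Now use the base furnished by Lemma~3.3. Set $V_n:=U_{*}(\phi,A_n,1/n)$, an open neighbourhood of $\phi$ in $Z$, and put $G:=\bigcap_{n}V_n$, a $G_{\delta}$-set containing $\phi$. If $\psi\in G$, then for each $n$ we have $\sup_{x\in A_n}\rho(\phi(x),\psi(x))<1/n$; hence for every $x\in A$, choosing $m$ with $x\in A_m$, we get $\rho(\phi(x),\psi(x))<1/n$ for all $n\ge m$, so $\psi(x)=\phi(x)$. Thus every $\psi\in G$ agrees with $\phi$ on $A$. Consequently $G$ cannot meet $Z_0$: a function $\psi\in G\cap Z_0$ would be continuous on $X$ and satisfy $\psi|_A=\phi|_A$, i.e.\ would be a continuous extension of $\phi|_A$ to $X$, contradicting the choice of $A$. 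Hence $G\subseteq Z\setminus Z_0$, and Theorem~3.1 yields that $Z_0$, and therefore $C_{\lambda,\rho}(X,Y)\cong Z_0$, is Dieudonn\'e complete.

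The one point requiring care --- and the main conceptual step --- is the interplay in the previous paragraph: functional generation is exactly what converts the \emph{global} failure of continuity of $\phi$ on $X$ into a \emph{local}, restriction-based obstruction on some $A\in\sigma\lambda$, while the countable-union structure of $\sigma\lambda$ is precisely what allows the corresponding neighbourhood to be taken $G_{\delta}$ (one factor $U_{*}(\phi,A_n,1/n)$ per $A_n$). Everything else is the routine verification that $Z$ is Dieudonn\'e complete and that $e^\#$ is an embedding, both already available, together with the elementary estimate showing that membership in $G$ forces pointwise agreement with $\phi$ on $A$.
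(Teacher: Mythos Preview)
Your proof is correct and follows essentially the same route as the paper: embed $C_{\lambda,\rho}(X,Y)$ via $e^{\#}$ into the Dieudonn\'e complete space $C_{e^{-1}(\lambda),\rho}(X_{\lambda},Y)$ (Lemma~3.2), and for each $\phi$ in the complement use functional generation by $\sigma\lambda$ to find $A=\bigcup_n A_n$ on which $\phi$ has no continuous extension, so that $G=\bigcap_n U_*(\phi,A_n,1/n)$ is a $G_\delta$-neighbourhood of $\phi$ disjoint from the image, whence Theorem~3.1 applies. Your presentation is in fact a bit cleaner than the paper's (you make the $A_n$ increasing and spell out why membership in $G$ forces $\psi|_A=\phi|_A$), but the strategy and the key ingredients are identical.
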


\begin{proof} Let $e: X_{\lambda}\rightarrow X$ be  a natural condensation from
$\lambda$-leader $X_\lambda$ of $X$ onto $X$. By Lemma 3.2 we have
that $C_{e^{-1}(\lambda),\rho}(X_{\lambda},Y)$ is Dieudonn\'{e}
complete so using Theorem 3.1 it is enough to prove that
$C_{e^{-1}(\lambda),\rho}(X_{\lambda},Y)\setminus
e^\#(C_{\lambda,\rho}(X,Y))$ is a union of $G_{\delta}$ sets.

Let $f\in C_{e^{-1}(\lambda),\rho}(X_{\lambda},Y)\setminus
e^\#(C_{\lambda,\rho}(X,Y))$. Consider a function $g:X\rightarrow
Y$ such that $g(x)=f(e^{-1}(x))$. Since $X$ is functionally
generated with respect to $Y$ by $\sigma\lambda$, there is
$B=\bigcup\limits_{i=1}^n A_i\in \sigma \lambda$ where
$A_i\in\lambda$ for every $i=1,...,n$, such that $g|_B$ does not
extend to any continuous function. Construct $G_{\delta}$-set $V$
containing $f$:

$V=\bigcap \{U_{*}(f, \bigcup\limits_{i=1}^n e^{-1}(A_i),
\epsilon_n): n\in \mathbb{N}\}$ where $\epsilon_n=\frac{1}{n}$.
Note that $V\cap e^\#(C_{\lambda,\rho}(X,Y))=\emptyset$. Indeed,
for any function $h\in V$ the function $h\circ e^{-1}|_B=g|_B$
and, hence, $h\circ e^{-1}\notin C_{\lambda,\rho}(X,Y)$.

\end{proof}

Since any Dieudonn\'{e} complete space is a $\mu$-space, we get the following corollary.

\begin{corollary} {\it Let $X$ be a topological space, $\lambda$ be a bornology, $(Y,\rho)$ be a metric space. If $X$ is functionally generated with respect to $Y$ by
$\sigma\lambda$ then $C_{\lambda,\rho}(X,Y)$ is a $\mu$-space.}

\end{corollary}

Let's try to reverse the previous Theorem \ref{th14}.

Let us remind a modification of an arbitrary topological space
$X$ and an arbitrary family $\lambda\subseteq 2^X$ to a space
$X_{\lambda}$ consisting of the same set of points \cite{Os23}.

\begin{proposition}\label{pr16} Let $X$ be a topological space, $\lambda$ be a bornology, $(Y,\rho)$ be a metric space and
$e: X_{\lambda}\rightarrow X$ be a natural condensation from
$\lambda$-leader $X_\lambda$ of $X$  onto $X$. If for any $f\in
C_{e^{-1}(\lambda),\rho}(X_{\lambda},Y)\setminus
e^\#(C_{\lambda,\rho}(X,Y))$ there is a $G_{\delta}$-set $V$
containing $f$ such that $V\cap
e^\#(C_{\lambda,\rho}(X,Y))=\emptyset$ then $X$ is functionally
generated with respect to $Y$ by $\sigma\lambda$.

\end{proposition}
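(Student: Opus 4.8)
\section*{Proof proposal for Proposition \ref{pr16}}

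The plan is to argue by contraposition: assuming $X$ is \emph{not} functionally generated with respect to $Y$ by $\sigma\lambda$, I will exhibit a single function $g\in C_{e^{-1}(\lambda),\rho}(X_\lambda,Y)\setminus e^\#(C_{\lambda,\rho}(X,Y))$ which cannot be separated from $e^\#(C_{\lambda,\rho}(X,Y))$ by any $G_\delta$-set, thereby negating the hypothesis. Failure of functional generation hands me a discontinuous function $g:X\to Y$ with the property that $g|_B$ extends to a continuous map on all of $X$ for \emph{every} $B\in\sigma\lambda$; this $g$ (read as a function on the leader) will be the witness.

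First I would verify that $g$ lands in the correct difference set. Since $\lambda$ has a closed base I may take each $A\in\lambda$ to be closed, and then the extendability of $g|_A$ shows that $g|_{\overline{A}}=g|_A$ is continuous on $\overline{A}$. Because $e|_{\overline{A}}:\overline{A}^\tau\to\overline{A}$ is a homeomorphism and $X_\lambda$ is a $\lambda$-space, hence a $\lambda_f$-space, continuity of $g$ on every $\overline{A}$ upgrades to continuity of $g$ on all of $X_\lambda$; thus $g\in C_{e^{-1}(\lambda),\rho}(X_\lambda,Y)$. On the other hand $g$ is discontinuous on $X$, so $g\notin e^\#(C_{\lambda,\rho}(X,Y))$, since the image $e^\#(C_{\lambda,\rho}(X,Y))$ consists precisely of those functions that remain continuous for the original topology of $X$.

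Next, take any $G_\delta$-set $V=\bigcap_{n}W_n$ with $g\in V$ and each $W_n$ open. Since by Lemma 3.3 the sets $U_{*}(g,A,\epsilon)$ with $A\in\lambda$ and $\epsilon>0$ form a neighbourhood base at $g$, I may choose $A_n\in\lambda$ and $\epsilon_n>0$ with $U_{*}(g,A_n,\epsilon_n)\subseteq W_n$, so that $\bigcap_n U_{*}(g,A_n,\epsilon_n)\subseteq V$. Now set $B=\bigcup_n A_n\in\sigma\lambda$. By the choice of $g$, the restriction $g|_B$ extends to a continuous $h:X\to Y$. Since $h|_{A_n}=g|_{A_n}$ for each $n$, the constant $a=0$ witnesses $h\in U_{*}(g,A_n,\epsilon_n)$ for all $n$, whence $h\in V$; and $h$ is continuous on $X$, so $e^\#(h)\in e^\#(C_{\lambda,\rho}(X,Y))\cap V$. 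Therefore every $G_\delta$-set containing $g$ meets $e^\#(C_{\lambda,\rho}(X,Y))$, contradicting the hypothesis.

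I expect the main obstacle to be the first step: confirming that the witness $g$ genuinely belongs to $C_{e^{-1}(\lambda),\rho}(X_\lambda,Y)\setminus e^\#(C_{\lambda,\rho}(X,Y))$. This is exactly where the $\lambda$-space structure of the leader and the homeomorphism $e|_{\overline{A}}$ must be combined to convert ``$g|_B$ extends continuously for each $B\in\sigma\lambda$'' into ``$g$ is continuous on $X_\lambda$ but not on $X$.'' The remaining steps --- extracting basic neighbourhoods from the $G_\delta$ and exploiting closure of $\sigma\lambda$ under countable unions to produce the extension $h$ --- are routine once Lemma 3.3 is available.
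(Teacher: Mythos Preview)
Your argument is correct and is essentially the contrapositive of the paper's own proof: both hinge on the observation (via Lemma~3.3) that a $G_\delta$-neighbourhood of $g$ in $C_{e^{-1}(\lambda),\rho}(X_\lambda,Y)$ is determined by countably many sets $A_n\in\lambda$, so a single $B=\bigcup_n A_n\in\sigma\lambda$ controls membership in it. The paper runs the implication directly and splits into the two cases ``$F$ continuous on $X_\lambda$'' versus ``$F$ discontinuous on $X_\lambda$'', whereas your contrapositive packaging absorbs the second case into the verification that the witness $g$ lies in $C(X_\lambda,Y)$, which makes the write-up a bit cleaner.
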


\begin{proof} Let $f\in C_{e^{-1}(\lambda),\rho}(X_{\lambda},Y)\setminus e^\#(C_{\lambda,\rho}(X,Y))$
and $V$ is a $G_{\delta}$-set containing $f$ such that $V\cap
e^\#(C_{\lambda,\rho}(X,Y))=\emptyset$. Since $\{U_*(f,
e^{-1}(A),\epsilon): f\in C(X_{\lambda},Y), A\in\lambda,
\epsilon>0\}$ is a base of
$C_{e^{-1}(\lambda),\rho}(X_{\lambda},Y)$, we can assume that
$V=\bigcap \{U_{*}(f, \bigcup\limits_{i=1}^n e^{-1}(A_i),
\varepsilon_n): n\in \mathbb{N}\}$  where $A_i\in \lambda$ for
each $i=1,...,n$ and $\varepsilon_n\leq \frac{1}{n}$ for every
$n\in \mathbb{N}$. Let $B\in \sigma\lambda$, $W=\{g=h\circ e^{-1}:
h\in V\}$ and $F=f\circ e^{-1}$. Note that $g|_B=F|_B$ for any
$g\in W$. Since $V\cap e^\#(C_{\lambda,\rho}(X,Y))=\emptyset$, any
$g\in W$ is discontinuous. Thus, we proved that $F|_B$ does not
extend to any continuous function for any $B\in \sigma\lambda$.
Let $f\in Y^X\setminus C(X_{\lambda},Y)$. Then there is $A\in
\lambda$ such that $f|_A$ is discontinuous ($X_{\lambda}$ is a
$\lambda_f$-space). Hence, $X$ is functionally generated with
respect to $Y$ by the family $\sigma\lambda$.
\end{proof}

Note that Proposition \ref{pr16} is not a complete inversion of Theorem \ref{th14}. Moreover, as Theorem \ref{th39} will show,
Theorem \ref{th14} is not completely invertible.

\medskip

A cardinal $\tau$ is called {\it Ulam measurable} if on the set of cardinality $\tau$ there is a maximal centered system with empty intersection, such that the intersection of any countable subfamily of it is not empty (i.e. countably centered).

In consistency with the axioms of set theory of the system $ZFC$,
we may assume that measurable cardinals do not exist. Under this
assumption every Dieudonn\'{e} complete space is realcompact. The
converse is always true.

\begin{theorem} Let $X$ be a topological space, $\lambda$ be a
bornology and $(Y,\rho)$ be a metric space. If $X$ is functionally
generated with respect to $Y$ by $\sigma\lambda$ and
$c(C_{\lambda,\rho}(X,Y))$ is not Ulam measurable cardinal then
$C_{\lambda,\rho}(X,Y)$ is realcompact.
\end{theorem}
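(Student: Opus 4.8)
The plan is to combine the Dieudonné completeness already established in Theorem~\ref{th14} with the standard fact that a Dieudonné complete space is realcompact precisely when it contains no closed discrete subspace of Ulam measurable cardinality (equivalently, when its Dieudonné completion and Hewitt realcompactification coincide). First I would invoke Theorem~\ref{th14}: since $X$ is functionally generated with respect to $Y$ by $\sigma\lambda$, the space $C_{\lambda,\rho}(X,Y)$ is Dieudonné complete. It therefore remains only to upgrade Dieudonné completeness to realcompactness under the cardinality hypothesis on the Suslin number $c(C_{\lambda,\rho}(X,Y))$.

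The key step is a general principle: a Dieudonné complete space $Z$ is realcompact whenever no closed discrete subspace of $Z$ carries an Ulam measurable cardinal. The obstruction to realcompactness of a Dieudonné complete space is exactly the presence of a nonzero two-valued measure vanishing on points, and such a measure forces a closed discrete subset of Ulam measurable size. So I would bound the size of any closed discrete subspace of $C_{\lambda,\rho}(X,Y)$ by its Suslin number: a discrete family of nonempty open sets has cardinality at most $c(C_{\lambda,\rho}(X,Y))$, and around each point of a closed discrete set one can choose pairwise disjoint basic open neighborhoods, using the basic sets $U_{*}(f,A,\epsilon)$ furnished by Lemma~3.3. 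Hence every closed discrete subspace has cardinality at most $c(C_{\lambda,\rho}(X,Y))$, which by hypothesis is not Ulam measurable.

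From here the argument closes quickly. A closed discrete subspace of a space $Z$ is itself a closed subspace, and if $Z$ is Dieudonné complete then so is every closed subspace, so each closed discrete subspace $D$ of $C_{\lambda,\rho}(X,Y)$ is a Dieudonné complete discrete space of non-Ulam-measurable cardinality; a discrete space is realcompact iff its cardinality is not Ulam measurable, so each such $D$ is realcompact. Since a Dieudonné complete space all of whose closed discrete subspaces are realcompact is itself realcompact, we conclude that $C_{\lambda,\rho}(X,Y)$ is realcompact.

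The main obstacle I expect is the bookkeeping in the bound on closed discrete subspaces: one must verify that distinct points of a closed discrete set can be separated by \emph{disjoint} members of the base $\{U_{*}(f,A,\epsilon)\}$, so that these neighborhoods form a discrete (hence cellular) family whose size is controlled by the Suslin number. The subtlety is that $U_{*}$-neighborhoods are indexed by a triple $(f,A,\epsilon)$ with $A$ ranging over the bornology, and the metric $\rho^{*}(f,g)=\max\{1,\sup_{x\in\overline A}\rho(f(x),g(x))\}$ from Lemma~3.2 only controls convergence on a single $\overline A$ at a time; so the disjointness must be arranged uniformly across the diagonal system. Once that cellularity bound is secured, the passage from ``no Ulam measurable closed discrete subspace'' to realcompactness is the standard theorem identifying the kernel of the Dieudonné completion with the realcompactification in the absence of Ulam measurable cardinals.
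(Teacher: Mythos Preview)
Your overall architecture matches the paper exactly: first invoke Theorem~\ref{th14} to obtain Dieudonn\'e completeness, then upgrade to realcompactness using the hypothesis on the Suslin number. The paper does the second step in one line by citing Shirota~\cite{Shir}: for any Tychonoff space $Z$, if $c(Z)$ is not Ulam measurable then Dieudonn\'e completeness implies realcompactness. You instead try to reprove this implication, and that is where the gap lies.

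The problematic step is your claim that every closed discrete subset of $C_{\lambda,\rho}(X,Y)$ has cardinality at most $c(C_{\lambda,\rho}(X,Y))$, justified by ``around each point of a closed discrete set one can choose pairwise disjoint basic open neighborhoods''. This is the collectionwise Hausdorff property, and it is \emph{not} available in an arbitrary Tychonoff (or even Dieudonn\'e complete) space; nothing about the base $\{U_{*}(f,A,\epsilon)\}$ from Lemma~3.3 forces such a separation. In cardinal-function language you are asserting that the extent is bounded by the cellularity, which is false in general, so the ``main obstacle'' you flag is not a bookkeeping issue but a genuine obstruction: the disjointness you need simply cannot be arranged uniformly across an arbitrary closed discrete family. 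The Shirota result the paper quotes does not go through such a bound; it uses a different mechanism (roughly, controlling the metric factors in a product representation of the Dieudonn\'e complete space). So either cite Shirota as the paper does, or replace your extent-versus-cellularity step by an argument that actually lives inside the product-of-metrics description of Dieudonn\'e completeness rather than inside $C_{\lambda,\rho}(X,Y)$ itself.
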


\begin{proof} By result in \cite{Shir}, if a Tychonoff space $Z$ such that $c(Z)$ is not Ulam measurable cardinal then Dieudonn\'{e} completeness of $Z$ implies realcompactness of $Z$.
It remains to apply Theorem \ref{th14}.
\end{proof}

\begin{corollary} {\it Assume that Ulam measurable cardinals do not exist. Let $X$ be a topological space, $\lambda$ be a
bornology and $(Y,\rho)$ be a metric space. If $X$ is functionally
generated with respect to $Y$ by $\sigma\lambda$ then
$C_{\lambda,\rho}(X,Y)$ is realcompact.}

\end{corollary}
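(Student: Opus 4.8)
The plan is to read this statement as the immediate specialization of the theorem proved just above, in which the cellularity hypothesis becomes vacuous. That previous theorem asserts that whenever $X$ is functionally generated with respect to $Y$ by $\sigma\lambda$ and the Suslin number $c(C_{\lambda,\rho}(X,Y))$ is not an Ulam measurable cardinal, the space $C_{\lambda,\rho}(X,Y)$ is realcompact. Since the present corollary carries exactly the same functional-generation hypothesis, all that remains is to discharge the arithmetic condition on $c(C_{\lambda,\rho}(X,Y))$.

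First I would observe that, by the very definition of Ulam measurability recalled above, the assumption that Ulam measurable cardinals do not exist says precisely that no cardinal whatsoever is Ulam measurable. In particular the cardinal $c(C_{\lambda,\rho}(X,Y))$ --- whatever its value --- cannot be Ulam measurable. Hence the hypotheses of the preceding theorem are met without any extra assumption, and its conclusion that $C_{\lambda,\rho}(X,Y)$ is realcompact follows at once.

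I expect no genuine obstacle here, since the corollary is a trivial weakening of the previous theorem. The only point that must be kept honest is the chain of implications underlying that theorem: Theorem \ref{th14} provides that $C_{\lambda,\rho}(X,Y)$ is Dieudonn\'{e} complete, and the set-theoretic fact recorded in the introduction (namely that Dieudonn\'{e} completeness coincides with realcompactness in the absence of Ulam measurable cardinals) upgrades this to realcompactness. Thus one could equally well prove the corollary directly from Theorem \ref{th14} together with that equivalence, bypassing the cellularity formulation altogether; the two arguments coincide.
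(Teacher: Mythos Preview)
Your argument is correct and matches the paper's treatment: the corollary is stated without proof immediately after the theorem on cellularity, and you have correctly identified that under the blanket assumption that no Ulam measurable cardinals exist, the condition on $c(C_{\lambda,\rho}(X,Y))$ is automatically satisfied, so the preceding theorem applies verbatim. Your alternative observation---that one can bypass the cellularity theorem and argue directly from Theorem~\ref{th14} together with the introduction's remark that Dieudonn\'{e} completeness coincides with realcompactness when measurable cardinals are absent---is also sound and amounts to the same thing.
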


Next we give an example showing that Theorem \ref{th14} cannot be reversed.

\begin{theorem}\label{th39} There exists Tychonoff space $X$ such that $C_{\lambda,\rho}(X)$ is
realcompact (a fortiori it is Dieudonn\'{e} complete) where
$\lambda$ is a bornology with a base $\lambda'$ where $\lambda'$
is the family of all countable compact subsets of $X$ and  $X$ is
not functionally generated with respect to $\mathbb{R}$ by the
family $\sigma\lambda$.
\end{theorem}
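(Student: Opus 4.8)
The plan is to construct $X$ so that it carries a point of uncountable functional tightness (producing a discontinuous function that is extendable on every countable union of countably compact sets) while the countably compact subsets generating $\lambda$ are non-metrizable, which is exactly the feature I will use to force $C_{\lambda,\rho}(X)$ to be realcompact. Concretely I would take $\omega_1$ pairwise disjoint copies $L_i\cong[0,\omega_1)$ ($i<\omega_1$) of the countably compact, non-metrizable ordinal space, keep the order topology on each $L_i$ (so each $L_i$ is clopen), and adjoin a single point $p$ whose basic neighbourhoods are $\{p\}\cup\bigcup_{i\in\omega_1\setminus C}L_i$ for $C\in[\omega_1]^{\le\omega}$. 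First I would check that $X$ is Tychonoff (it is zero-dimensional: $p$ is separated from each $L_i$ by a clopen set) and that, since a sequence choosing points from infinitely many distinct blocks has no cluster point, every countably compact subset of $X$ meets only finitely many blocks. Hence every member of $\sigma\lambda$ meets only countably many blocks, so $p$ lies in the closure of no $\sigma\lambda$-set, and in particular $X\notin\sigma\lambda$.

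For the failure of functional generation I would use $f=\chi_{\{p\}}$. It is discontinuous at $p$, because every neighbourhood of $p$ meets block-tails on which $f=0$. On the other hand, given $B\in\sigma\lambda$, the set $B$ meets only countably many blocks, say those indexed by $C_0\in[\omega_1]^{\le\omega}$; then $p$ is isolated in $B$, so $f|_B$ is continuous there, and the clopen set $\{p\}\cup\bigcup_{i\notin C_0}L_i$ yields a continuous extension $g=\chi_{\{p\}\cup\bigcup_{i\notin C_0}L_i}$ of $f|_B$ to all of $X$. Thus $f|_B$ extends for every $B\in\sigma\lambda$, witnessing that $X$ is not functionally generated with respect to $\mathbb{R}$ by $\sigma\lambda$.

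The realcompactness of $C_{\lambda,\rho}(X)$ is the heart of the matter and the step I expect to fight with. As in Lemma 3.2 I would embed $C_{\lambda,\rho}(X)$ into $W=\varprojlim\{C_\rho(\overline{A}):A\in\lambda\}$, a closed subspace of $\prod_{A}C_\rho(\overline{A})$; here each $\overline{A}$ lies in finitely many blocks, so $C_\rho(\overline{A})$ is a complete metric space of non-measurable cardinality, whence $W$ is a product of realcompact spaces and is itself realcompact. The difficulty is that $C_{\lambda,\rho}(X)$ is not closed in $W$: the net $g_C=\chi_{\{p\}\cup\bigcup_{i\notin C}L_i}$ ($C\in[\omega_1]^{\le\omega}$) converges in $W$ to the discontinuous function $\chi_{\{p\}}\in W\setminus C_{\lambda,\rho}(X)$, and because $f|_B$ always extends, $C_{\lambda,\rho}(X)$ is in fact $G_\delta$-dense in $W$. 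Consequently the Kats-type criterion of Theorem 3.1 cannot be invoked (this is precisely the obstruction recorded in Proposition 3.6), and I cannot separate the hole $\chi_{\{p\}}$ from $C_{\lambda,\rho}(X)$ by a $G_\delta$. The way around this is to exploit the non-separability of the factors $C_\rho(L_i)$: unlike the pointwise case, where Mazur's theorem forces every continuous real function on the function space to depend on countably many coordinates and thereby extend to $W$, here a continuous real function on $C_{\lambda,\rho}(X)$ cannot be reduced to countably many blocks, so $\upsilon\bigl(C_{\lambda,\rho}(X)\bigr)$ need not swallow $\chi_{\{p\}}$. The hard part is to turn this into a proof that every free $z$-ultrafilter on $C_{\lambda,\rho}(X)$ with the countable intersection property is fixed; I would attempt this by showing that the net $(g_C)$, and every net running into a point of $W\setminus C_{\lambda,\rho}(X)$, fails to be Cauchy for the fine uniformity, so that $C_{\lambda,\rho}(X)$ is Dieudonn\'e complete, and then upgrade to realcompactness using that its Suslin number is non-measurable.
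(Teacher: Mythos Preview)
Your construction handles the failure of functional generation cleanly, but the proposal has a genuine gap at precisely the point you yourself flag: the realcompactness of $C_{\lambda,\rho}(X)$ is never established. You correctly observe that the image of $C_{\lambda,\rho}(X)$ in the inverse limit $W$ is $G_\delta$-dense, so Theorem~3.1 is unavailable; but what follows is only a programme (``I would attempt this by showing\ldots'') rather than an argument. The assertion that nets converging to points of $W\setminus C_{\lambda,\rho}(X)$ fail to be Cauchy for the fine uniformity is not substantiated, and no concrete mechanism is given by which the non-separability of the factors $C_\rho(L_i)$ would force this. As written, the proposal does not prove the theorem.

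The paper avoids this difficulty entirely by choosing a much simpler witness: $X=\omega_1+1$ with the order topology. There the countable compact subsets coincide with the metrizable compact subsets, and a theorem of Gul'ko gives that for $\lambda$ generated by the metrizable compacta the space $C_{\lambda,\rho}(X)$ is hereditarily paracompact, hence realcompact. The failure of functional generation is witnessed by the characteristic function of $\{\omega_1\}$: every $S\in\sigma\lambda$ is countable, so $S\setminus\{\omega_1\}$ is bounded below some $\beta<\omega_1$ and the obvious step function extends $f|_S$. Thus the paper replaces your direct attack by a single citation, and the step you could not close simply never arises; your elaborate one-point extension of $\omega_1$ copies of $[0,\omega_1)$ is unnecessary for the result.
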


\begin{proof} Let $X$ be an ordinal $\omega_1+1$ equipped with the order topology.

Since $\lambda'$ is a family of all metrizable compact subsets of
$X$, the space $C_{\lambda,\rho}(X)$ is  a (hereditary)
paracompact space (see Theorem 6 in \cite{Gul}), hence, it is
realcompact.

\medskip
The function

$$ f(\alpha)= \left\{
\begin{array}{lcr}
0, \ \ \ \ if \, \alpha <\omega_1, \\
1, \, \, \, \, if \, \, \, \, \alpha=\omega_1\\
\end{array}
\right.
$$

is discontinuous, but since every $S\in \sigma\lambda$ is
countable $f\upharpoonright S$ is continuous.

Note that for any countable set $S\in \sigma\lambda$ there is
$h\in C(X)$ such that $h\upharpoonright S=f\upharpoonright S$.

If the point $\omega_1\not\in S$ then there is $\beta<\omega_1$
such that $S\subset \{\xi: \xi<\beta\}$. Let $h(\alpha)=0$ for any
$\alpha\in X$.

If the point $\omega_1\in S$ then there is $\beta<\omega_1$ such
that $S\setminus \{\omega_1\}\subset \{\xi: \xi<\beta\}$. Let

$$ h(\alpha)= \left\{
\begin{array}{lcr}
0, \ \ \ \ if \, \alpha <\beta^+, \\
1, \, \, \, \, if \, \, \, \, \alpha\geq \beta^+\\
\end{array}
\right.
$$

Thus, $X$ is not functionally generated with respect to
$\mathbb{R}$ by the family $\sigma\lambda$.

\end{proof}

\section{Eberlein--\v{S}mulian theorem}

\medskip

\begin{definition}
 A Hausdorff topological space $E$ is called {\it angelic} if every relatively countably compact set $K$
 in $E$ is relatively compact and for every $x\in \overline{K}\setminus K$ there exists a sequence in $K$ converging to $x$.
\end{definition}

In angelic spaces, the (relatively) countably compact, (relatively) compact and (relatively) sequentially compact sets are the same; see (\cite{freml}, Theorem 3.3), and also
\cite{Gov}, where a proper subclass of angelic spaces has been studied.

Classical examples of angelic spaces include spaces $C_p(X)$ with
compact $X$ (see, for example, \cite{190, 240}), Banach spaces $E$
with the weak topology $\sigma(E,E')$ \cite{pric} and spaces of
first Baire class functions on any Polish space $P$ with the
topology of the pointwise topology on $P$ (see \cite{Bur}).

\begin{lemma}(\cite{57}) Let $f:Y\rightarrow X$ be a condensation from a Tychonoff space $Y$ onto an angelic Tychonoff space $X$. Then $Y$ is angelic.

\end{lemma}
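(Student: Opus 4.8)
The plan is to verify directly the two defining properties of an angelic space for $Y$: that every relatively countably compact $K\subseteq Y$ has compact closure, and that every point of $\overline{K}^{Y}\setminus K$ is a sequential limit of points of $K$. Fix such a $K$. First I would push $K$ forward: since $f$ is continuous it sends cluster points of a sequence to cluster points of the image sequence, so $f(K)$ is relatively countably compact in $X$, and angelicity of $X$ (together with Theorem 3.3 of \cite{freml}) makes $C:=\overline{f(K)}^{X}$ compact. I would also record the fact that a compact subset of an angelic space is Fr\'echet--Urysohn: for $A\subseteq C$ and $x\in\overline{A}$, the set $A$ is relatively compact in $X$, so by angelicity $x$ is a sequential limit from $A$. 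This Fr\'echet--Urysohn property of $C$ is what will do the real work.

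Next I would identify $\overline{K}^{Y}$ with $C$ as sets through $f$. Continuity gives $f(\overline{K}^{Y})\subseteq C$; for the reverse inclusion, given $x\in C\setminus f(K)$, angelicity of $X$ yields $k_n\in K$ with $f(k_n)\to x$, and the cluster point $y$ of $(k_n)$ guaranteed by relative countable compactness of $K$ satisfies $f(y)=x$ by Hausdorffness of $X$, so $x\in f(\overline{K}^{Y})$. Thus $g:=f|_{\overline{K}^{Y}}\colon\overline{K}^{Y}\to C$ is a continuous bijection.

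The key lemma I would isolate is this: if $(y_n)$ is any sequence in $\overline{K}^{Y}$ with $f(y_n)\to x$ in $C$, then $y_n\to y:=f^{-1}(x)$ in $Y$. Arguing by contradiction, suppose some neighbourhood $U\ni y$ is avoided by a subsequence; using regularity of $Y$ pick $U_0$ with $\overline{U_0}\subseteq U$ and set $K'=K\setminus\overline{U_0}$. Since each such $y_n\in\overline{K}^{Y}$ lies in the open set $Y\setminus\overline{U_0}$, continuity and density of $K$ in its closure force $f(y_n)\in\overline{f(K')}^{X}$, whence $x\in\overline{f(K')}^{X}$. As $K'\subseteq K$ is again relatively countably compact, angelicity of $X$ gives either $x\in f(K')$, so $y\in K'\subseteq Y\setminus\overline{U_0}$, contradicting $y\in U_0$; or a sequence $w_j\in K'$ with $f(w_j)\to x$, whose cluster point is forced to be $y$ as before, yet cannot lie in $U_0$ since no $w_j$ does, again a contradiction.

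Finally I would cash this in. The lemma says precisely that $g^{-1}$ is sequentially continuous; since $C$ is Fr\'echet--Urysohn (hence sequential), sequential continuity implies continuity, so $g$ is a homeomorphism and $\overline{K}^{Y}\cong C$ is compact, giving relative compactness of $K$. Being homeomorphic to the Fr\'echet--Urysohn space $C$, the closure $\overline{K}^{Y}$ is itself Fr\'echet--Urysohn, so any $y\in\overline{K}^{Y}\setminus K$, lying in the closure of $K$ computed inside $\overline{K}^{Y}$, is a sequential limit from $K$; this is the second angelic condition. The main obstacle is exactly the relative compactness: a continuous bijection onto a compactum need not have compact domain, and the whole point is that the Fr\'echet--Urysohn character of $C$, inherited from angelicity of $X$, upgrades the sequential continuity of $g^{-1}$ to genuine continuity.
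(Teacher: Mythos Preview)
The paper does not supply its own proof of this lemma: it is quoted with a bare citation to Floret~\cite{57} (the so-called ``Angelic Lemma''), so there is no in-paper argument to compare against. Your proof is correct and is essentially the classical proof of this result: push $K$ forward to a relatively countably compact set in $X$, use angelicity to get compactness and the Fr\'echet--Urysohn property of $C=\overline{f(K)}$, then show that the continuous bijection $g=f|_{\overline{K}}\colon\overline{K}\to C$ has a sequentially continuous inverse, which on a Fr\'echet--Urysohn target upgrades to a homeomorphism. Your ``key lemma'' (if $f(y_n)\to f(y)$ with $y_n\in\overline{K}$ then $y_n\to y$) and its proof via $K'=K\setminus\overline{U_0}$ are precisely the heart of the standard argument. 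The one place where your write-up is a touch elliptical is the line ``continuity and density of $K$ in its closure force $f(y_n)\in\overline{f(K')}^{X}$''; the clean way to justify this is to note $\overline{K}\subseteq\overline{K'}\cup\overline{U_0}$ (since $K=K'\cup(K\cap\overline{U_0})$), so $y_{n_j}\notin\overline{U_0}$ gives $y_{n_j}\in\overline{K'}$ and hence $f(y_{n_j})\in\overline{f(K')}$. With that made explicit, the proof is complete.
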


\begin{theorem}(\cite{freml}, Theorem 3.5)
If $C_p(X)$ is angelic, then $C_p(X,Z)$ is angelic for any metric
space $Z$.
\end{theorem}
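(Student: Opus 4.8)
The plan is to reduce the statement about $C_p(X,Z)$ for an arbitrary metric space $Z$ to the hypothesis that $C_p(X)=C_p(X,\mathbb{R})$ is angelic, by exhibiting a condensation onto a known angelic space and then invoking the preceding Lemma (the one from \cite{57} saying that a condensation from a Tychonoff space onto an angelic Tychonoff space forces the domain to be angelic). The first step is to recall that a metric space $(Z,\rho)$ embeds isometrically — hence homeomorphically — into a product of lines: indeed any metric space embeds into $C_p(Z)$ itself via $z\mapsto \rho(z,\cdot)$, or more cheaply into some power $\mathbb{R}^{\kappa}$ of the real line where $\kappa$ is the density (or weight) of $Z$. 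Fix such an embedding $j:Z\hookrightarrow \mathbb{R}^{\kappa}$, so that $Z$ is (homeomorphic to) a subspace of $\mathbb{R}^{\kappa}$.

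Next I would lift this to the function-space level. Composition with $j$ induces a topological embedding
\[
C_p(X,Z)\hookrightarrow C_p(X,\mathbb{R}^{\kappa})\cong \big(C_p(X)\big)^{\kappa},
\]
where the last homeomorphism is the standard coordinatewise identification $f\mapsto (\pi_\alpha\circ f)_{\alpha<\kappa}$ valid in the topology of pointwise convergence. The key point here is that pointwise convergence in $C_p(X,\mathbb{R}^\kappa)$ is exactly coordinatewise pointwise convergence, so this identification is a homeomorphism and it carries $C_p(X,Z)$ onto a (topological, not necessarily closed) subspace of $\big(C_p(X)\big)^{\kappa}$. Since a subspace of an angelic space is angelic (angelicity is hereditary, as is visible directly from the definition via relatively countably compact subsets), it suffices to prove that the full power $\big(C_p(X)\big)^{\kappa}$ is angelic.

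\textbf{The main obstacle} is therefore establishing that an arbitrary (possibly large) power of an angelic space is angelic; this is the only non-formal ingredient, since everything else is embedding bookkeeping. This is where I expect to invoke the condensation Lemma rather than prove a preservation theorem for products from scratch: I would look for a natural condensation from $\big(C_p(X)\big)^{\kappa}$ onto an angelic target. The cleanest route is to observe that $\big(C_p(X)\big)^{\kappa}$ is naturally a dense subspace, or a condensation image, of a space built from $C_p(X)$ whose angelicity is already known; concretely, one uses that countable powers and $\Sigma$-products behave well under angelicity, and that relatively countably compact sets in a product project into relatively countably compact sets in each factor, whose cluster points and convergent sequences can then be assembled. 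Once angelicity of the power is in hand, the hereditary property yields angelicity of the embedded copy of $C_p(X,Z)$, completing the proof; the delicate step to get right is the sequential part of the definition — producing, for $x\in\overline{K}\setminus K$, an actual sequence in $K$ converging to $x$ in the product topology, which requires a diagonal argument coordinating the countably many coordinates that witness membership in the relatively countably compact set.
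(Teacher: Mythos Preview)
The paper does not supply its own proof of this statement; it is quoted verbatim from Floret's monograph (\cite{freml}, Theorem~3.5), so there is no in-paper argument to compare against. Your proposal must therefore stand or fall on its own merits, and it has a genuine gap at the decisive step.

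You correctly embed $C_p(X,Z)$ into $(C_p(X))^{\kappa}$ and correctly note that angelicity is hereditary, so everything would follow \emph{if} $(C_p(X))^{\kappa}$ were angelic. But angelicity is \emph{not} preserved by uncountable powers, and this fails already in the simplest instance of the theorem: take $X$ to be a one-point space, so that $C_p(X)=\mathbb{R}$ is angelic, while $(C_p(X))^{\kappa}=\mathbb{R}^{\kappa}$ is not angelic for uncountable $\kappa$. Indeed, let $K\subseteq\{0,1\}^{\kappa}\subseteq\mathbb{R}^{\kappa}$ be the set of characteristic functions of finite subsets of $\kappa$; then $K$ is relatively countably compact (it sits inside the compact cube), the constant function $\mathbf{1}$ lies in $\overline{K}$, yet no sequence from $K$ can converge to $\mathbf{1}$ because the union of the countably many finite supports misses some coordinate. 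Your proposed rescue via Lemma~4.2 cannot work either: a condensation from $(C_p(X))^{\kappa}$ onto an angelic target would, by that very lemma, force $(C_p(X))^{\kappa}$ to be angelic, which we have just seen is false. The hand-wave toward $\Sigma$-products and a diagonal argument does not produce such a map and cannot. The argument that actually succeeds (Floret's) does not route through the full uncountable product; it exploits the metric on $Z$ to reduce, for each fixed relatively countably compact $K\subseteq C_p(X,Z)$ and each $f\in\overline{K}$, the approximation problem to a \emph{countable} family of real-valued constraints, at which point the hypothesis on $C_p(X,\mathbb{R})$ (and the fact that countable products of angelic spaces \emph{are} angelic) suffices. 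Your embedding into $\mathbb{R}^{\kappa}$ discards exactly the countability that the metric on $Z$ provides.
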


In \cite{57} and \cite{48}, it is proved some generalization of the Eberlein--\v{S}mulian theorem for angelic spaces.

\begin{theorem} \label{th2} If $A$ is a subset of an angelic space $X$, then the following statements are equivalent:

(1) $A$ is relatively compact;

(2) $A$ is relatively sequential compact;

(3) $A$ is relatively countably compact.

\end{theorem}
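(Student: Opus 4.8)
The plan is to establish the full equivalence by proving $(1)\Leftrightarrow(3)$ and $(2)\Leftrightarrow(3)$, which together yield the three-way equivalence. Three of the four implications are routine and do not invoke the sequential clause of angelicity. For $(1)\Rightarrow(3)$ I would observe that if $\overline{A}$ is compact, then any sequence from $A$ lies in the compact set $\overline{A}$ and hence has a cluster point in $\overline{A}\subseteq X$, so $A$ is relatively countably compact. For $(2)\Rightarrow(3)$ I would note that the limit of a convergent subsequence is in particular a cluster point of the whole sequence, so relative sequential compactness immediately gives relative countable compactness. Finally, $(3)\Rightarrow(1)$ is nothing but the first clause of the definition of an angelic space: every relatively countably compact set is relatively compact.

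The substantive implication is $(3)\Rightarrow(2)$, and this is where the sequential clause of angelicity enters. First I would record the elementary but essential remark that relative countable compactness is inherited by subsets: if $B\subseteq A$, then every sequence in $B$ is a sequence in $A$ and therefore has a cluster point in $X$. Now fix a sequence $(a_n)$ in $A$ and put $D=\{a_n:n\in\mathbb{N}\}$. Then $D\subseteq A$ is relatively countably compact, so by angelicity $\overline{D}$ is compact; in particular the sequence $(a_n)$ has a cluster point $x\in\overline{D}\subseteq X$. I would then reduce to the case $x\notin D$: if $a_n=x$ for infinitely many $n$, the corresponding constant subsequence already converges to $x$, and otherwise only finitely many terms equal $x$, so deleting them yields a subsequence $(a'_n)$ of $(a_n)$ that still clusters at $x$ and whose value set $D'=\{a'_n\}$ omits $x$.

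With $x\in\overline{D'}\setminus D'$, the sequential clause of angelicity applied to the relatively countably compact set $D'$ supplies a sequence $(d_k)$ in $D'$ converging to $x$. The final step is purely combinatorial: since $X$ is Hausdorff and $x\notin D'$, each fixed point of $D'$ can occur only finitely often among the $d_k$ (otherwise it would be a limit distinct from $x$), so the original indices of the $d_k$ take infinitely many values, and I can thin $(d_k)$ to a subsequence with strictly increasing original indices. This produces an honest subsequence of $(a_n)$ converging to $x\in X$, establishing relative sequential compactness.

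I expect the main obstacle to be precisely this last bookkeeping: angelicity delivers merely a convergent sequence drawn from the set $D'$, not an honest subsequence of $(a_n)$ with increasing indices, and the case distinction according to whether the cluster point is attained by the sequence must be handled carefully. Hausdorffness is used crucially to guarantee that no value is repeated infinitely often in the approximating sequence, which is exactly what makes the index extraction possible.
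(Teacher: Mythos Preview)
Your argument is correct. The case split on whether the cluster point $x$ is attained by the sequence, the use of Hausdorffness to ensure no value of $D'$ repeats infinitely often in the approximating sequence, and the index-extraction step are all handled properly; the inheritance of relative countable compactness by subsets is the small but necessary observation that makes the angelicity clause applicable to $D'$.

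However, you should be aware that the paper does \emph{not} supply its own proof of this theorem. The result is quoted as a known fact, attributed explicitly to Floret (\cite{freml}, Theorem~3.3) and to the references \cite{57} and \cite{48}; the theorem is stated without a proof environment and is used as a black box in the subsequent Theorem~4.5. So there is no ``paper's proof'' to compare against. What you have written is essentially the standard argument one finds in Floret's lecture notes, so in that sense your approach aligns with the cited source rather than differing from it.
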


In \cite{AsVel}, classes of the space $X$ were found for which
relatively compact sets in $C_p(X)$ are defined as bounded or
countably compact. This allows us to continue generalizing the
Eberlein--\v{S}mulian theorem for the class of Banach spaces.

\medskip
Let us recall some definitions that will be needed in the
following theorem. Let $X$ be a normed linear space and $X'$ be
the continuous dual of $X$. The {\it weak topology} on $X$,
denoted by $\sigma(X,X')$, is defined to be the coarsest topology
such that each $x'\in X'$ is continuous.

Recall that  a locally convex topology $\tau$ on a normed linear
space $(X,\|\cdot\|)$ is called {\it compatible} for the pair
$(X,X')$ if $(X,\tau)'=X'$ as vector spaces. Note that in this
case $\tau\supseteq \sigma(X,X')$ (Ch.IV in \cite{50}).

\begin{theorem} Let $(X,\|\cdot\|)$ be a Banach space,  $\tau$ be a locally convex topology on $X$ compatible for the pair $(X,X')$ and $A\subseteq X$. Then
the following statements are equivalent for the space $(X,\tau)$:

(1) $A$ is relatively compact;

(2) $A$ is relatively sequential compact;

(3) $A$ is relatively countably compact;

(4) $A$ is (topologically) bounded.

\end{theorem}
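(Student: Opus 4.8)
The plan is to establish the cycle of implications $(1)\Rightarrow(2)\Rightarrow(3)\Rightarrow(4)\Rightarrow(1)$ by combining the abstract angelic-space machinery (Theorem~\ref{th2}) with the classical Eberlein--\v{S}mulian theorem stated in the introduction, and with the compatibility hypothesis on $\tau$. The conceptual heart of the argument is that although $\tau$ may be strictly finer than the weak topology $\sigma(X,X')$, the four compactness-type properties in question all ultimately collapse onto the \emph{weak} topology, for which the classical Eberlein--\v{S}mulian theorem applies. So my first step would be to record the key structural facts: since $(X,\tau)'=X'$, the space $(X,\tau)$ carries a locally convex topology whose continuous dual is exactly $X'$, and $\tau\supseteq\sigma(X,X')$. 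I would observe that $(X,\tau)$ is angelic: the weak topology $\sigma(X,X')$ on a Banach space is angelic by the cited result \cite{pric}, the identity map $(X,\tau)\to(X,\sigma(X,X'))$ is a condensation onto an angelic Tychonoff space, and hence by Lemma~4.3 \cite{57} the space $(X,\tau)$ is itself angelic.

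\medskip

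Once angelicity of $(X,\tau)$ is in hand, Theorem~\ref{th2} immediately gives the equivalence of $(1)$, $(2)$ and $(3)$ for the space $(X,\tau)$, with no further work. It remains to tie in the fourth condition, $(4)$ $A$ is (topologically) bounded, meaning every continuous real-valued function on $(X,\tau)$ is bounded on $A$. The implication $(1)\Rightarrow(4)$ is routine: a relatively compact set is bounded because the continuous image of its compact closure is compact, hence bounded, in $\mathbb{R}$. The substantive direction is $(4)\Rightarrow(1)$ (equivalently $(4)\Rightarrow(3)$), and this is where the compatibility hypothesis and the passage to the weak topology must do their real work.

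\medskip

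For $(4)\Rightarrow(1)$ my plan is to transfer the problem to $\sigma(X,X')$. If $A$ is topologically bounded in $(X,\tau)$, then in particular every $x'\in X'=(X,\tau)'$ is bounded on $A$, so $A$ is weakly bounded, i.e.\ $\sup_{a\in A}|x'(a)|<\infty$ for each $x'\in X'$. By the uniform boundedness principle (applied to $X$ as a Banach space, viewing the elements of $A$ as functionals on the Banach space $X'$ via the canonical embedding), weak boundedness of $A$ forces norm-boundedness of $A$. Now I invoke the full strength of the \emph{Banach space} structure: a norm-bounded set in a Banach space has weakly compact closure precisely when the classical Eberlein--\v{S}mulian hypotheses are met, but in general one uses that the $\sigma(X,X')$-closure of a norm-bounded set sits inside a multiple of the closed unit ball, and then the equivalence in the classical Eberlein--\v{S}mulian theorem (conditions $(1)$--$(3)$ there) converts relative weak countable compactness into relative weak compactness. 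The key point is that a norm-bounded sequence in $A$, by the classical theorem applied in $\sigma(X,X')$, has a weak cluster point, so $A$ is relatively countably compact in $\sigma(X,X')$; since $\sigma(X,X')$ is angelic this upgrades to relative weak compactness, and because $\tau$ is compatible, compactness for $\tau$ and for $\sigma(X,X')$ coincide on such sets (a set that is $\sigma(X,X')$-compact and whose $\tau$-closure has the same continuous functionals is $\tau$-compact, as $\tau$ and $\sigma(X,X')$ agree on weakly compact sets by Mazur/Grothendieck-type arguments). This yields $(1)$ for $(X,\tau)$.

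\medskip

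\textbf{The main obstacle} I anticipate is precisely this last equivalence between $\tau$-compactness and $\sigma(X,X')$-compactness on bounded sets: one must justify that on a norm-bounded, weakly relatively compact set the two topologies $\tau$ and $\sigma(X,X')$ agree (so that relative weak compactness implies relative $\tau$-compactness, and conversely). This rests on the standard fact that on a $\sigma(X,X')$-compact convex set every compatible locally convex topology induces the same topology — ultimately a consequence of the Hahn--Banach separation theorem and the coincidence of closures of convex sets under compatible topologies. Handling nonconvex $A$ requires passing to closed convex hulls (invoking that the closed convex hull of a weakly compact set is weakly compact, by the Krein--\v{S}mulian theorem) and then restricting back, which is the one place where care with the Banach-space completeness hypothesis is genuinely needed rather than the abstract angelic framework.
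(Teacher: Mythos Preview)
Your treatment of the equivalence $(1)\Leftrightarrow(2)\Leftrightarrow(3)$ via angelicity, and of $(1)\Rightarrow(4)$, matches the paper. The problem is your argument for $(4)\Rightarrow(1)$, which contains a genuine gap.

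You reduce ``$A$ is topologically bounded in $(X,\tau)$'' to ``every $x'\in X'$ is bounded on $A$'', hence to norm-boundedness via uniform boundedness. But from that point on you are effectively trying to prove that every \emph{norm-bounded} set is relatively $\tau$-compact, and in particular your sentence ``a norm-bounded sequence in $A$, by the classical theorem applied in $\sigma(X,X')$, has a weak cluster point'' is simply false in a non-reflexive Banach space. In $\ell^1$ with $\tau=\sigma(\ell^1,\ell^\infty)$ the standard unit vectors $(e_n)$ are norm-bounded but have no weak cluster point in $\ell^1$; they are, however, \emph{not} topologically bounded in the sense of the paper (every continuous real-valued function bounded on the set), so there is no contradiction with the theorem. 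The hypothesis $(4)$ is much stronger than weak or norm boundedness, and by discarding all non-linear continuous functions in your first step you have thrown away exactly the information needed.

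The paper's proof of $(4)\Rightarrow(1)$ takes an entirely different route and never passes through norm-boundedness. It realises $\tau$, via the Mackey--Arens description of compatible topologies, as a topology of uniform convergence on a bornology $\lambda$ of $\sigma(X',X)$-relatively compact sets, embeds $(X,\tau)$ into the function space $C_{e^{-1}(\lambda),\rho}(X'_\lambda)$ over the $\lambda$-leader of $X'$, and uses the paper's earlier machinery to see that this ambient space is Dieudonn\'{e} complete, hence a $\mu$-space, so the (topologically) bounded set $A$ has compact closure there. The identification of that closure with the $\tau$-closure in $X$ is then carried out via the Asanov--Velichko theorem (bounded subsets of $C_p(Z)$ are relatively compact when $Z$ is first countable, applied with $Z=X'$), together with a comparison of closures under the condensation $C_{e^{-1}(\lambda),\rho}(X'_\lambda)\to C_p(X'_\lambda)$. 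None of this can be replaced by the uniform boundedness principle.
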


\begin{proof} Since the topology $\tau$ on $X$ is finer than the weak topology
$\sigma(X,X')$, the identity map $e: (X,\tau)\rightarrow (X,
\sigma(X,X'))$ is a condensation. By Lemma 4.2, $(X,\tau)$ is an
angelic space. Thus statements (1), (2) and (3) are equivalent by
Theorem 4.4. Given that the implication $(1)\Rightarrow(4)$ is
obvious, it is sufficient to prove $(4)\Rightarrow(1)$.

Since $\tau$ is a locally convex topology on $X$ compatible for
the pair $(X,X')$, $\tau$ is a topology of uniform convergence on
elements of $\lambda$ for a bornology $\lambda$ on $X'$ where
$\lambda$ consisting of relatively compact subsets of $(X',
\sigma(X',X))$ (see Theorem 3.2, Ch.IV in \cite{50}). Thus, the
space $(X,\tau)$ can be considered as a subspace of space $X''$
with a topology induced from the space $C_{\lambda,\rho}(X')$
where $X'$ is the continuous dual space to $X$, $X''$ is the
continuous double dual space to $(X,\tau)$. Then, we can consider
the space $(X,\tau)$ as subspace of
$C_{e^{-1}(\lambda),\rho}(X'_{\lambda})$ where $e:
X'_{\lambda}\rightarrow X'$ is a natural condensation from
$\lambda$-leader $X'_{\lambda}$ of $X'$ onto $X'$ \cite{Os23}.

We denote by $X_{\sigma}$ the space $(X,\sigma(X,X'))$. The space
$X_{\sigma}$ is homeomorphically embedded to $C_p(X')$ and
$C_p(X')$ is homeomorphically embedded to $C_p(X'_{\lambda})$. The
condensation $id:
C_{e^{-1}(\lambda),\rho}(X'_{\lambda})\rightarrow
C_p(X'_{\lambda})$ (identity mapping) induces a condensation
$id_X: (X,\tau)\rightarrow X_{\sigma}$.

Let $A$ be a bounded subset of $(X,\tau)$. Since $X'_{\lambda}$ is
a $\lambda$-space (and, hence, $\lambda_f$-space), by Proposition
4.7 in \cite{Os23}, the space
$C_{e^{-1}(\lambda),\rho}(X'_{\lambda})$ is a complete uniform
space. Thus, $C_{e^{-1}(\lambda),\rho}(X'_{\lambda})$ is a
Dieudonn\'{e} complete space and, hence, it is a $\mu$-space.
Since $A$ is a bounded subset of $(X,\tau)$ and $(X,\tau)$ is a
subset of $C_{e^{-1}(\lambda),\rho}(X'_{\lambda})$, $A$ is a
bounded subset of $C_{e^{-1}(\lambda),\rho}(X'_{\lambda})$. It
follows that
$\overline{A}^{C_{e^{-1}(\lambda),\rho}(X'_{\lambda})}$ is
compact.

  On the other hand, $id_X(A)$ is bounded in $X_{\sigma}$. Note that
$id_X(\overline{A}^X)\subseteq \overline{id_X(A)}^{X_{\sigma}}$.

Recall that $X':=(X,\tau)'=(X,\|\cdot\|)'$. Thus, $X'$ is a Banach
space. Since $X'$ is first countable, by Theorem 1 in
\cite{AsVel}, $\overline{id_X(A)}^{C_p(X')}$ is compact.

Since $X_{\sigma}$ is homeomorphically embedded to $C_p(X')$ and
the homeomorphic image of $X_{\sigma}$ is closed in $C_p(X')$,
$\overline{id_X(A)}^{X_{\sigma}}=\overline{id(A)}^{C_p(X')}=\overline{id(A)}^{C_p(X'_{\lambda})}$.
Since the mapping $id$ is a condensation and
$\overline{A}^{C_{e^{-1}(\lambda),\rho}(X'_{\lambda})}$ is
compact,
$id(\overline{A}^{C_{e^{-1}(\lambda),\rho}(X'_{\lambda})})=\overline{id
(A)}^{C_p(X'_{\lambda})}$. Note that $id\upharpoonright
(\overline{A}^{C_{e^{-1}(\lambda),\rho}(X'_{\lambda})}):
\overline{A}^{C_{e^{-1}(\lambda),\rho}(X'_{\lambda})} \rightarrow
\overline{id (A)}^{C_p(X'_{\lambda})}$ is a homeomorphism.

Since $id_X(\overline{A}^X)\subseteq
\overline{id_X(A)}^{X_{\sigma}}$ and
$\overline{id_X(A)}^{X_{\sigma}}\subseteq
\overline{id(A)}^{C_p(X'_{\lambda})}$,
$\overline{A}^X=\overline{A}^{C_{e^{-1}(\lambda),\rho}(X'_{\lambda})}$.

It follows that $\overline{A}^X$ is compact.
\end{proof}

\begin{corollary} {\it  Let $(X,\|\cdot\|)$ be a Banach space and $\tau$ be a locally convex topology on $X$ compatible for the pair $(X,X')$. Then $(X,\tau)$ is a $\mu$-space.}

\end{corollary}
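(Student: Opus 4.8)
The plan is to invoke the preceding theorem directly. Recall that, by definition, a space is a $\mu$-space precisely when every bounded subset is relatively compact. The theorem just proved establishes, for an arbitrary subset $A$ of $X$ endowed with the topology $\tau$, the equivalence of the four conditions, among which (1) is ``$A$ is relatively compact'' and (4) is ``$A$ is (topologically) bounded''. In particular, the single implication $(4)\Rightarrow(1)$ asserts exactly that every bounded subset of $(X,\tau)$ is relatively compact, which is the defining property of a $\mu$-space.

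First I would note that the hypotheses of the corollary---$(X,\|\cdot\|)$ a Banach space and $\tau$ a locally convex topology on $X$ compatible for the pair $(X,X')$---coincide verbatim with the standing hypotheses of the theorem, so the theorem applies without any additional assumption. Then, fixing an arbitrary bounded set $A\subseteq(X,\tau)$, condition (4) holds for $A$; by the equivalence this forces condition (1), i.e.\ $\overline{A}^X$ is compact. Since $A$ was an arbitrary bounded subset, $(X,\tau)$ is a $\mu$-space.

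I do not expect any genuine obstacle here, and I would be explicit about that. The entire analytic content has already been discharged inside the proof of the theorem: the realization of $(X,\tau)$ as a subspace of $C_{e^{-1}(\lambda),\rho}(X'_{\lambda})$, the Dieudonn\'e completeness (hence $\mu$-space property) of that function space, and the transfer of compactness through the condensation $id_X:(X,\tau)\to X_{\sigma}$ together with the results from \cite{AsVel}. The only remaining step is to recognize that the definition of a $\mu$-space is nothing more than the implication $(4)\Rightarrow(1)$ quantified over all bounded sets, so the corollary is a direct restatement of one direction of the theorem.
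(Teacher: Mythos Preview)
Your proposal is correct and matches the paper's intent: the corollary is stated without proof because it is an immediate consequence of the implication $(4)\Rightarrow(1)$ in the preceding theorem, which is exactly the definition of a $\mu$-space. There is nothing to add.
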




{\bf Acknowledgements.} The authors would like to thank the
referee for careful reading and valuable comments.

\bibliographystyle{model1a-num-names}
\bibliography{<your-bib-database>}

\end{document}